\documentclass[11pt]{amsart}

\usepackage{amsmath,amssymb}
\usepackage{amsthm}
\usepackage{enumitem}
\usepackage{multicol}
\usepackage{enumitem}

\usepackage{color}

\usepackage{tikz}
\usetikzlibrary{matrix,arrows}

\newtheorem{theorem}{Theorem}

\newtheorem{proposition}[theorem]{Proposition}

\theoremstyle{definition}
\newtheorem{definition}[theorem]{Definition}

\newtheorem{claim}[theorem]{Claim}

\newcommand{\Q}{\mathbb{Q}}

\newcommand{\Z}{\mathbb{Z}}
\newcommand{\F}{\mathbb{F}}

\newcommand{\la}{\langle}
\newcommand{\ra}{\rangle}

\newcommand{\CC}{\mathcal{C}}
\newcommand{\B}{\mathcal{B}}

\title{On the Regularity of Orientable Matroids}

\begin{document}
\maketitle
\begin{center}
\author{Libby Taylor}\footnotemark[1]
\end{center}

\footnotetext[1]{
School of Mathematics, Georgia Institute of Technology
(libbytaylor@gatech.edu)
}

\begin{abstract}
We present two characterizations of regular matroids among orientable matroids and use them to give a measure of ``how far" an orientable matroid is from being regular.
\end{abstract}

\section{Introduction}\label{intro}

A regular matroid is a matroid which is representable over any field; these objects have been heavily studied in the literature.  
The following are both characteristics of a regular matroid $M$:

\begin{enumerate}[noitemsep]
\item Every basis of $M$ generates the same lattice of fundamental circuits.
\item The rank of the circuit lattice of $M$ is equal to $\text{corank}(M)$.
\end{enumerate}

These are both folklore results which, to the best of the author's knowledge, have not been written down explicitly in the literature.  A proof of these statements is provided in Section \ref{characterizations}.  

Our main contribution in this paper is to prove that the two properties above characterize regular matroids.  (2) was proved in ~\cite{nickel}; here we give a new proof.

\begin{theorem}\label{circuitlattices}

For $M$ an orientable matroid, the following are equivalent:

\begin{enumerate}[noitemsep]
\item $M$ is regular.
\item Every basis of $M$ generates the entire circuit lattice of $M$.
\item The rank of the circuit lattice of $M$ is equal to $\text{corank}(M)$.  
\end{enumerate}

\end{theorem}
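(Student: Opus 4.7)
My plan is to prove the circular implications $(1) \Rightarrow (2) \Rightarrow (3) \Rightarrow (1)$, with the last implication carrying the main technical content.

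For $(1) \Rightarrow (2)$, I fix a totally unimodular representation $A$ of $M$ and a basis $B$. Pivoting $A$ so that its $B$-columns form an identity, the signed fundamental circuits with respect to $B$ appear as the $\pm 1$ rows of the complementary tableau. By classical TU theory, these rows form a $\Z$-basis of the integer kernel lattice $\ker(A) \cap \Z^E$, which contains every signed circuit of $M$; hence the fundamental circuits of $B$ generate the entire signed circuit lattice. For $(2) \Rightarrow (3)$, I note that for any basis $B$ the $\text{corank}(M)$ fundamental circuit vectors are $\Q$-linearly independent (each has a unique support in $E \setminus B$), so if they generate the circuit lattice, that lattice has $\Z$-rank exactly $\text{corank}(M)$.

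For $(3) \Rightarrow (1)$, let $W \subseteq \Q^E$ denote the $\Q$-span of the signed circuit lattice; condition (3) forces $\dim_\Q W = \text{corank}(M)$, so for any fixed basis $B$ the fundamental circuits already span $W$. After sign normalization I stack these $\pm 1$-vectors as rows of an $(|E|-r) \times |E|$ matrix of the form $[D \mid I]$ (with $B$-columns first, $D \in \{-1,0,1\}^{(|E|-r) \times r}$), and set $A := [I \mid -D^T]$, so that $\ker A = W$. I will verify that $M$ equals the matroid $M(A)$ represented by $A$ by checking that $B$ is a basis of $M(A)$ (no nonzero element of $W$ is supported on $B$) and that the fundamental circuits of $M(A)$ with respect to $B$ coincide with those of $M$. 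Now for every basis $B'$ of $M$, the fundamental circuits with respect to $B'$ are signed circuits of $M$ whose $\pm 1$-lifts lie in $W = \ker A$; equivalently, the pivoted tableau $A_{B'}^{-1} A$ has entries in $\{-1, 0, +1\}$ for every basis $B'$. A short determinantal argument then shows that all basis minors of $A$ have the same absolute value, so a scaling of $A$ is totally unimodular and $M$ is regular.

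The hardest step is $(3) \Rightarrow (1)$, where the abstract rank hypothesis on the lattice must be upgraded into a concrete unimodular representation of $M$. Orientability is essential: it supplies the canonical $\pm 1$-lift of each signed circuit, and (3) funnels all such lifts into a single $\text{corank}(M)$-dimensional subspace, which we then realize as the cycle space of an explicit matrix $A$ built from fundamental circuits. The uniform $\pm 1$ structure of these circuit lifts across \emph{all} bases is what propagates into unimodularity of $A$; extracting this uniformity from the single hypothesis on the rank of the circuit lattice is the crux of the argument.
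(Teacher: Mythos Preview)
Your cyclic scheme $(1)\Rightarrow(2)\Rightarrow(3)\Rightarrow(1)$ differs from the paper's, which instead proves $(1)\Leftrightarrow(2)$ directly (building $A=[I\mid D]$ from $C(B_0)$ and arguing in three steps that $C(M)\subseteq C(M')$, that $A$ is unimodular via Cramer's rule, and finally that $C(M)=C(M')$), and then deduces $(3)\Rightarrow(1)$ by contraposition: if $M$ is not regular, pick bases $B_1,B_2$ with $\Lambda_1\neq\Lambda_2$, find $c\in C(B_2)\setminus\Lambda_1$, and show $C(B_1)\cup\{c\}$ is a $\Z$-independent set of size $\text{corank}(M)+1$. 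Your direct attack on $(3)\Rightarrow(1)$ is a legitimate alternative and arguably more streamlined, since it produces the unimodular representation in one pass rather than routing through the lattice-equality hypothesis.

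There is, however, a real gap in your argument for $M=M(A)$. You write that it suffices to check that $B$ is a basis of $M(A)$ and that the fundamental circuits with respect to $B$ agree; but matching a single basis together with its fundamental circuits does \emph{not} determine a matroid (e.g.\ $U_{2,4}$ and the rank-$2$ matroid on four points with $3$ parallel to $4$ share the basis $\{1,2\}$ and the same fundamental circuits $\{1,2,3\},\{1,2,4\}$). Relatedly, when you write $A_{B'}^{-1}A$ for an arbitrary basis $B'$ of $M$, you are tacitly assuming $A_{B'}$ is invertible, i.e.\ that $B'$ is a basis of $M(A)$, which has not been established. The fix uses exactly the extra information your construction carries: since every signed circuit of $M$ (as a $\{0,\pm1\}$-vector) lies in $W=\ker A$, any set dependent in $M$ is dependent in $M(A)$; and conversely, for any basis $B'$ of $M$ its $\text{corank}(M)$ fundamental circuits lie in $W$ and project to the standard basis of $\Q^{E\setminus B'}$, so nothing nonzero in $W$ is supported on $B'$ and hence $B'$ is a basis of $M(A)$. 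This two-sided inclusion gives $M=M(A)$, after which your pivot/Cramer argument goes through. You should insert this step explicitly; as written, the verification you propose is the classical pitfall and would not stand on its own.
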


Although there are several other characterizations of regularity among orientable matroids--see for example \cite{chiho}, \cite{goldenratio}, and \cite{tutte}--the ones given here can be used to provide a notion of how badly the matroid fails to be regular.  This will be made precise in Section \ref{distance}.  

\section{Background}\label{background}

In this section we recall the definitions of oriented and regular matroids and set up notation.  For a more comprehensive overview, we refer the reader to the book on matroids by Oxley \cite{oxley} and the book on oriented matroids by Bj\"orner et al \cite{redbook}.  An oriented matroid $M$ is, informally speaking, a matroid together with a set of sign data which behaves well under circuit elimination.  More precisely, when $E$ is the ground set of $M$, a signed subset of $E$ is a map $X:E\to \{+,-,0\}$.  The {\em support} of $X$ is denoted $\underline{X}$ and defined to be $\{e\in E:X(e)\neq 0\}$.  We define $X^+=\{e\in E:X(e)=+\}$ and $X^-=\{e\in E:X(e)=-\}$.  

\begin{definition}\label{orientedmatroid}

An oriented matroid $M=(E,\CC)$ is a nonempty finite set $E$ together with a collection $\CC$ of signed subsets that satisfy the following axioms:
\begin{enumerate}[noitemsep]
\item $\CC\neq \emptyset$.
\item If $C\in \CC$, then $-C\in \CC$.
\item For all $C_1,C_2\in \CC$, if $\underline{C_1}\subseteq \underline{C_2}$, then either $C_1=C_2$ or $-C_1=C_2$.
\item For all $C_1,C_2\in \CC$, $e\in C_1^+\cap C_2^-$, and $f\in (C_1^+\setminus C_2^-)\cup (C_1^-\setminus C_2^+)$, there is a $C_3\in \CC$ such that $C_3^+\subset (C_1^+\cup C_2^+)\setminus \{e\}$, $C_3^-\subset(C_1^-\cup C_2^-)\setminus \{e\}$, and $f\in \underline{C_3}$. \footnote{This condition is generally known as the \emph{strong circuit elimination} axiom.}
\end{enumerate}

\end{definition}

The elements of $\CC$ are called {\em signed circuits}.  

A matrix is said to be unimodular if the determinant of every maximal square minor is $\pm 1$; a matrix is totally unimodular if the determinant of every square minor is $\pm 1$.  

\begin{theorem}\cite[Theorem 3.1.1]{white}\label{unimodular}

Let $M$ be a matroid.  The following are equivalent:

\begin{enumerate}[noitemsep]
\item $M$ is representable over $\Q$ by a totally unimodular matrix.
\item $M$ is representable over $\Q$ by a unimodular matrix.
\item $M$ is representable over any field $\F$.
\item $M$ is orientable and representable over $\F_2$.
\end{enumerate}

\end{theorem}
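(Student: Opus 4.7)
The plan is to establish the four equivalences via the cycle $(1)\Rightarrow(2)\Rightarrow(3)\Rightarrow(4)\Rightarrow(1)$, with the implication $(1)\Rightarrow(2)$ being immediate from the definitions.

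For the easy directions, I would argue as follows. $(2)\Rightarrow(1)$: given a unimodular representation $A$ of $M$, pivot on a basis so that $A$ takes standard form $[I_r \mid D]$ with integer entries (pivoting over $\Q$ divides only by the determinant of a maximal minor, which is $\pm 1$). Every $k\times k$ minor of $D$ then equals, up to sign, an $r\times r$ minor of $[I_r \mid D]$ obtained by exchanging columns with $I_r$, and is therefore in $\{0,\pm 1\}$, so the standardized representation is totally unimodular. $(1)\Rightarrow(3)$: reduction of a totally unimodular matrix modulo any prime (or tensoring with a characteristic-zero field) preserves the pattern of nonzero maximal minors, since each such minor has absolute value $0$ or $1$ in $\Z$ and is hence either zero or nonzero in any field. $(3)\Rightarrow(4)$: representability over $\F_2$ is immediate, and a real representation yields an orientation by reading off the signs of the coefficients in each fundamental circuit of the column vectors.

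The bulk of the work lies in $(4)\Rightarrow(1)$. Given an orientable matroid $M$ that is also representable over $\F_2$ via some matrix $\bar A$, the goal is to lift $\bar A$ to an integer matrix $A$ with entries in $\{0,\pm 1\}$ that is totally unimodular and still represents $M$. I would do this by choosing a basis $B$, writing $\bar A$ in standard form $[I\mid \bar D]$, and then defining the lift entry-by-entry using the signed fundamental circuits: for each nonbasic element $e$, the fundamental circuit $C(e,B)\in\CC$ carries a canonical orientation (up to global sign from axiom (2) of Definition \ref{orientedmatroid}), and I set the entry $A_{b,e}$ to $+1$ or $-1$ so that the signs in column $e$ match this orientation. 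The entries of the lifted $I$-block are forced. This produces a candidate $A$; what remains is to show that $A$ is totally unimodular.

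The main obstacle is verifying total unimodularity of $A$. The natural approach is a minimal-counterexample argument: assume a $k\times k$ submatrix has determinant outside $\{0,\pm 1\}$ with $k$ minimal, expand by cofactors, and use the strong circuit elimination axiom (Definition \ref{orientedmatroid}(4)) to derive a signed circuit whose signs are incompatible with those used to construct the lift, contradicting minimality. This is essentially the content of the classical Bland--Las Vergnas theorem, and the delicate point is precisely the global consistency of the local sign choices made from different fundamental circuits; circuit elimination is what forces this consistency. An alternative route is to reformulate the lift as a Tutte chain group and invoke Tutte's characterization of regular chain groups, which packages the same consistency check.
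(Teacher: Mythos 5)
The paper does not prove this theorem; it is quoted verbatim from White's book, so there is no internal argument to measure yours against. Judged on its own, your organization into the cycle $(1)\Rightarrow(2)\Rightarrow(3)\Rightarrow(4)\Rightarrow(1)$ together with $(2)\Rightarrow(1)$ is sound, and the easy legs are handled correctly: the pivoting argument showing that a unimodular matrix can be brought to a totally unimodular standard form $[I_r\mid D]$ (via the identity between minors of $D$ and maximal minors of $[I_r\mid D]$), the observation that a determinant lying in $\{0,\pm 1\}$ is zero over one field if and only if it is zero over every field, and the passage from a real representation to an orientation are all the standard arguments and are complete enough.

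The entire weight of the theorem sits in $(4)\Rightarrow(1)$, and there your text describes the shape of the known proof rather than giving one. Two things need more than a gesture. First, even granting that the signed lift $A$ of the $\F_2$-representation is totally unimodular, you should record why $A$ still represents $M$ over $\Q$: this follows because every maximal minor of $A$ lies in $\{0,\pm 1\}$ and reduces modulo $2$ to the corresponding minor of $\bar A$, so the two matrices single out the same column bases; without this remark the lift could a priori represent a different matroid. Second, and more seriously, the step from ``a minimal square submatrix has $|\det|\ge 2$'' to ``circuit elimination yields a sign contradiction'' is precisely the content of the Bland--Las Vergnas theorem that regular equals binary plus orientable; the global consistency of the local sign choices taken from different fundamental circuits is where essentially all of the combinatorial work lives, and the cofactor expansion by itself does not manufacture the signed circuits one needs to compare. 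As written, your proposal correctly reduces the hard implication to a known theorem (Bland--Las Vergnas, or equivalently Tutte's theory of regular chain groups) but does not prove it. If you want a self-contained route, an alternative for $(4)\Rightarrow(1)$ is via excluded minors: binary means no $U_4^2$ minor, the Fano matroid and its dual are not orientable (a finite verification), and Tutte's excluded-minor theorem then gives regularity---though this trades one deep theorem for another. Given that the paper itself treats the statement as a black-box citation, your outline is acceptable as such, but it should be presented as a citation, not as a proof.
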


If $M$ satisfies any of the above conditions, we say $M$ is regular.

Since matroids representable over ordered fields are orientable, it follows that all regular matroids are orientable. 
In fact, the oriented structure on a regular matroid is unique up to reorientation \cite[Corollary 7.9.4]{redbook}.  It is well known that a matroid is regular if and only if its dual is.

Given a basis $B$ of $M$ and $e\in E\setminus B$, denote as $C(B,e)$ the oriented fundamental circuit of $B$ and $e$; that is, the unique oriented circuit whose support is contained in $B\cup e$.

Throughout the rest of the paper, $M$ will denote an orientable matroid of rank $r$ with ground set $E$.  Let $\B$ denote the set of bases of $M$ and let $C(B_i)$ and $C^*(B_i)$ denote the set of oriented fundamental circuits and cocircuits, respectively, of a basis $B_i\in \B$.  Let $\Lambda_i$ and $\Lambda_i^*$ denote the lattices in $\Z^E$ generated by $C(B_i)$ and $C^*(B_i)$, respectively.  The lattice $\Lambda_i$ is the \textit{circuit lattice} of $M$, and $\Lambda_i^*$ is the \textit{cocircuit lattice} of $M$.  In defining these lattices, vectors with entries in $\{+,-,0\}$ are treated as vectors in $\Z^E$ with entries in $\{1,-1,0\}$ in the obvious way.

\section{Characterizations of regular matroids}\label{characterizations}

We begin by proving the folklore result mentioned in Section \ref{intro}.

\begin{proposition}\label{folklore}

When $M$ is a regular oriented matroid, the following hold.

\begin{enumerate}[noitemsep]
\item Every basis of $M$ generates the same lattice of fundamental circuits.
\item The rank of the circuit lattice of $M$ is equal to $\text{corank}(M)$.  That is, $\text{rank}(\Lambda)=\text{corank}(M)$.  
\end{enumerate}

\end{proposition}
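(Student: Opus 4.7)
The plan is to pass to a totally unimodular representation of $M$. By Theorem \ref{unimodular}, $M$ is represented by a totally unimodular matrix $A$ over $\Q$; after row operations adapted to a chosen basis $B$, we may assume $A = [I_r \mid D]$, where the columns of $I_r$ correspond to $B$. Since $A$ is totally unimodular, $D$ also has entries in $\{-1,0,1\}$. For each $e \notin B$, define $f_e \in \Z^E$ by $f_e(e) = 1$, $f_e(b) = -D_{b,e}$ for $b \in B$, and zero elsewhere. Then $f_e \in \ker(A)$, has support contained in $B \cup \{e\}$, and has all nonzero entries equal to $\pm 1$.

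The first substantive step is to identify $\pm f_e$ with the signed fundamental circuit $C(B,e)$. The matrix $A$ induces an orientation on $M$; by the uniqueness of orientation on a regular matroid (cited in Section \ref{background}), after possibly reorienting $A$ (which preserves total unimodularity and changes the kernel only by sign flips in fixed coordinates) this orientation agrees with the given one. Under this identification, $\pm f_e$ is precisely $C(B,e)$.

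Next I would show that $\{f_e : e \notin B\}$ is a $\Z$-basis for the integer kernel $K := \ker(A) \cap \Z^E$. Linear independence is immediate from the standard-basis block on coordinates outside $B$. For generation, any $v \in K$ satisfies $v|_B = -D \cdot v|_{E \setminus B}$, so $v = \sum_{e \notin B} v(e)\, f_e$ with integer coefficients. Combined with the easy observation that every oriented circuit of $M$, viewed as a $\pm 1$ signed vector, lies in $K$, this gives $\Lambda_B \subseteq K$ and $K \subseteq \Lambda_B$ (since the $f_e$ generate $K$ and lie in $\Lambda_B$). Hence $\Lambda_B = K$ for every basis $B$, which is independent of $B$; this proves (1). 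Part (2) then follows because $\text{rank}(K) = |E| - r = \text{corank}(M)$.

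The main obstacle is the identification of the signed fundamental circuits of the abstract oriented matroid with the explicit kernel vectors $f_e$. This is where regularity is essential: for a general orientable matroid there is no canonical integer kernel inside which the oriented circuits sit, so the clean correspondence above breaks down. Once this identification is in hand, the rest of the argument is linear algebra with the totally unimodular matrix.
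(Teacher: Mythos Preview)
Your proof is correct and follows the same overall strategy as the paper: pick a totally unimodular representation $A$, and show that for every basis $B$ the fundamental circuits $C(B)$ form a $\Z$-basis of $\ker(A)\cap\Z^E$; both (1) and (2) then follow immediately. The difference is in how step two is carried out. The paper invokes an external lemma (that a totally unimodular matrix has a totally unimodular kernel basis) and then argues, somewhat tersely, that the fundamental-circuit matrix must therefore be unimodular and hence an integral basis. You instead put $A$ in standard form $[I_r\mid D]$ relative to $B$, write down the kernel vectors $f_e$ explicitly, and verify by a one-line computation that they span the integer kernel. Your route is more self-contained and makes the role of total unimodularity transparent (it is only used to ensure $D$ has $\pm1$ entries, so that the $f_e$ really are the signed fundamental circuits); the paper's route is shorter on the page but outsources the key step.
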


\begin{proof}

Fix a totally unimodular matrix representation $A$ for $M$.  We first prove property (1).  The set of circuits of $M$ is defined to be the support-minimal elements of $\text{ker}(A)\cap \Z^E$.  Clearly $\text{ker}(A)$ has dimension equal to $\text{corank}(M)$ over $\Q$.  Any set of fundamental circuits of a single basis are independent over $\Q$, so they generate all of $\text{ker}(A)$.  

\medskip

Totally unimodular matrices have totally unimodular kernels (see for example ~\cite{tumatrix}, Lemma 12), by which we mean that there exists a matrix whose rows are a basis for the kernel which is itself totally unimodular.  Fix a basis $B$ and let $C(B)$ its set of fundamental circuits.  Consider the matrix whose rows are the elements of $C(B)$; since $C(B)$ forms a basis for $\text{ker}(A)$, this matrix is equivalent under a change of basis to one which is totally unimodular.  Since the fundamental circuits in $C(B)$ form a unimodular matrix and they form a $\Q$-basis for $\text{ker}(A) \cap \Z^E$, they must also form an integral basis for $\text{ker}(A)\cap \Z^E$.  Since $C(B)$ generates $\text{ker}(A)\cap \Z^E$, and $B$ was arbitrary.  This completes the proof of (1).  

\medskip

To prove (2), observe that (1) implies that the maximum size of an integrally independent set of circuits is $\text{corank}(M)$, since this is the rank of the lattice of circuits of $M$.

\end{proof}

Now we turn to proving that these properties of regular matroids are in fact characterizations.

\begin{proof}[Proof of Theorem~\ref{circuitlattices}]

We begin by proving the first property in Theorem \ref{circuitlattices}.  Proposition \ref{folklore} gives one direction, so it remains here to show that if each basis generates the entire circuit lattice, then $M$ is regular.  Suppose that all the $\Lambda_i$ are equal, and fix an initial basis $B_0$.  (Recall that $\Lambda_i$ denotes the lattice generated by the fundamental circuits of a basis $B_i$.)  Let $A$ be the matrix defined by
\[
A=[I_r|D]
\]
where $r$ is the rank of $M$, the columns in $I_r$ (which denotes the $r\times r$ identity matrix) correspond to the elements of $B_0$, and the columns in $D$ correspond to $e\in E\setminus B_0$.  The columns of $D$ are constructed according to their oriented fundamental circuit in $B_0$.  That is, the linear dependence among the columns of $I_r$ and a column $e\in D$ produces the oriented fundamental circuit $C(B_0,e)$, and the column $e$ has entries in $\{0, \pm 1\}$.

The matrix $A$ defines some matroid, which we call $M'$.  The circuits of $M'$ are the signed support of support-minimal elements of $\text{ker}(A)\cap \Z^E$.  The proof will proceed in 3 steps.  First, we will show that $C(M) \subseteq C(M')$.  Second, we will show that $A$ is unimodular, and therefore $M'$ is regular.  Third, we show that $C(M) \supseteq C(M')$, which implies that $M=M'$ and therefore that $M$ is regular.  For the first step, consider the following

\begin{claim}
$C(M) \subseteq C(M')$.  
\end{claim}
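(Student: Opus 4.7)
The plan is to first show that every signed circuit of $M$ lies in $\ker(A)\cap \Z^E$, and then that its support is minimal among nonzero kernel vectors, so that $C$ (with its signs) is a signed circuit of $M'$.

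By construction of $D$, each fundamental circuit $C(B_0,e)$ with $e\in E\setminus B_0$ records a column dependence of $A$, so lies in $\ker(A)\cap \Z^E$. Since the $I_r$-block has rank $r$, $\ker(A)$ has $\Q$-dimension $|E|-r=\text{corank}(M)$, and the $|E|-r$ vectors $C(B_0,e)$ are $\Q$-linearly independent (each has $\pm 1$ at its own $e$-coordinate and vanishes elsewhere in $E\setminus B_0$), so they form a $\Q$-basis of $\ker(A)$. These $\pm 1$ pivots force any integral kernel vector to have integral coefficients in this basis, so $\ker(A)\cap \Z^E = \Lambda_0=\Lambda$. Moreover, every circuit $C\in C(M)$ equals $\pm C(B,e)$ for some basis $B$ (take any $e\in \underline{C}$ and extend $\underline{C}\setminus\{e\}$ to a basis $B$), so $C\in \Lambda_B=\Lambda$ by hypothesis.

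The crux is an auxiliary lemma: every nonzero $v\in \Lambda$ satisfies $\underline{v}\supseteq \underline{C'}$ for some $C'\in C(M)$. Assuming otherwise, $\underline{v}$ contains no circuit support, so $\underline{v}$ is independent in $M$ and extends to a basis $B$. Expanding $v=\sum_{e\in E\setminus B}\alpha_e C(B,e)\in \Lambda_B$ and noting that $C(B,e)$ has value $\pm 1$ at $e$ but vanishes at every other element of $E\setminus B$, we get $\alpha_e=\pm v_e$; since $\underline{v}\subseteq B$ forces $v_e=0$ for $e\in E\setminus B$, all $\alpha_e$ vanish and $v=0$, a contradiction. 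I expect this lemma to be the main obstacle: it is precisely here that the hypothesis ``all $\Lambda_i=\Lambda$'' does real work, by letting us pass from $\Lambda_0$ to a basis adapted to $\underline{v}$.

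Given the lemma, the rest is immediate. If some nonzero $v\in \ker(A)\cap \Z^E=\Lambda$ had $\underline{v}\subsetneq \underline{C}$, the lemma would furnish a circuit $C'\in C(M)$ with $\underline{C'}\subseteq \underline{v}\subsetneq \underline{C}$, contradicting axiom (3) of Definition~\ref{orientedmatroid} in $M$. So $\underline{C}$ is support-minimal in $\ker(A)\setminus\{0\}$; since the kernel vectors supported on $\underline{C}$ form a one-dimensional $\Q$-subspace and $C$ has only $\pm 1$ entries on $\underline{C}$, $C$ is (up to sign) the corresponding support-minimal element of $\ker(A)\cap \Z^E$, and its signed support is therefore a signed circuit of $M'$.
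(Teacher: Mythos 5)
Your proof is correct and follows the same overall skeleton as the paper's: first place each circuit of $M$ in $\ker(A)\cap\Z^E$ via the equal-lattices hypothesis, then rule out a kernel vector of strictly smaller support. The genuine difference is your auxiliary lemma that every nonzero element of $\Lambda$ has support containing a circuit of $M$. The paper instead scales the smaller-support kernel vector $C'$ to get $tC'\in\Lambda$ and simply asserts that this ``witnesses'' a dependent set of $M$ inside $\underline{C}$; that implication is not automatic, and indeed it fails for general orientable matroids (for $U_4^2$ the circuit lattice has full rank $4$ in $\Z^4$, so it contains nonzero vectors with independent, even singleton, support). Your lemma supplies exactly the missing justification, and it is where the hypothesis that all bases generate the same lattice does a second round of work: extending the allegedly independent support $\underline{v}$ to a basis $B$ and reading off the coefficients of $v$ in $\Lambda_B$ from the $\pm 1$ pivot entries of the fundamental circuits $C(B,e)$ at the coordinates $e\in E\setminus B$. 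So your argument is not a different route so much as a completed version of the paper's; the identification $\ker(A)\cap\Z^E=\Lambda_0$ and the one-dimensionality argument at the end are also carried out more explicitly than in the paper, and both are correct.
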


\begin{proof} By assumption, $C(B_0)$ generates the circuit lattice of $M$; and by construction, $C(B_0)\subseteq C(M')$.  Therefore, any circuit of $M$ must be a linear dependence in $A$, since it is in the lattice of circuits of $M'$.  It remains to show that this linear dependence in $A$ is support-minimal in $M'$, i.e. support-minimal in $\text{ker}(A) \cap \Z^E$.  Suppose this circuit is not support-minimal, so there exists a circuit $C$ of $M$ and a circuit $C'$ of $M'$ such that $\text{supp}(C')\subsetneq \text{supp}(C)$.  Then $C'$ is a support-minimal element of $\text{ker}(A) \cap \Z^E$.  Since it is an element of $\text{ker}(A)$, it is some $\Q$-linear combination of elements of $C(B_0)$, and can be written as $C'=q_1 c_1 +\cdots+q_lc_\ell$ where the $q_i\in \Q$ and the  $c_i$ are fundamental circuits of $B_0$.  Therefore there exists an integer $t$ such that $tC'$ is in the circuit lattice of $M$ (we can take $t$ to be the least common denominator of the $q_i$ so that $tC'$ is an integer linear combination of the $c_i$), which witnesses that it is a dependent set in $M$ whose support is properly contained in that of $C$.  This contradicts the support-minimality of $C$ in $M$.  

Therefore $C(M)\subseteq C(M')$.  

\end{proof}

We now turn to the second step of the proof:

\begin{claim}\label{unimodular}

$A$ is unimodular, and therefore $M'$ is regular.

\end{claim}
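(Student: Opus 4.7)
The plan is to show that $A$ is unimodular; since $A$ represents $M'$ over $\Q$, Theorem~\ref{unimodular} will then give that $M'$ is regular. Concretely, I will show that for every $r$-subset $B'\subseteq E$ with $A_{B'}$ invertible, $\det(A_{B'})=\pm 1$; any other $r\times r$ submatrix of $A$ is singular and so has determinant $0$. The argument leverages the standing hypothesis that every basis of $M$ generates the same circuit lattice, together with the inclusion $C(M)\subseteq C(M')$ established in the previous claim.

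\textbf{Step 1 (bases of $M'$ are bases of $M$).} Any set independent in $M'$ contains no support of an $M'$-circuit, hence no support of an $M$-circuit, and so is independent in $M$. Since $A=[I_r|D]$ visibly has rank $r$, the matroid $M'$ has rank $r$; combined with $\text{rank}(M)=r$, every basis of $M'$ is a basis of $M$.

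\textbf{Step 2 (fundamental circuits of $B'$ sit in $\ker(A)$ with $\{0,\pm 1\}$ entries).} Fix a basis $B'$ of $M'$, and for $e\notin B'$ let $C(B',e)$ be its $M$-fundamental circuit, viewed as a vector with entries in $\{0,\pm 1\}$. By the hypothesis that all $\Lambda_i$ coincide, the lattice generated by $C(B')$ equals $\Lambda_{B_0}$, and $\Lambda_{B_0}\subseteq \ker(A)$ by the very construction of the columns of $D$. Hence $C(B',e)\in\ker(A)\cap\Z^E$ with support in $B'\cup\{e\}$, which reads as a column relation $A_e=\sum_{b\in B'}c_{b,e}A_b$ with $c_{b,e}\in\{0,\pm 1\}$. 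Equivalently, every entry of $(A_{B'})^{-1}A$ lies in $\{0,\pm 1\}$.

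\textbf{Step 3 (unimodularity of $A_{B'}$).} Because the columns of $A$ indexed by $B_0$ form $I_r$, restricting the matrix $(A_{B'})^{-1}A$ to those columns yields $(A_{B'})^{-1}$ itself. Hence $(A_{B'})^{-1}$ has entries in $\{0,\pm 1\}$, in particular integer entries. With both $A_{B'}$ and $(A_{B'})^{-1}$ integral, the identity $\det(A_{B'})\det((A_{B'})^{-1})=1$ with integer factors forces $\det(A_{B'})=\pm 1$. As $B'$ was arbitrary, $A$ is unimodular.

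The delicate point is Step~2: only the initial basis $B_0$ is tied to $A$ by construction, so transferring the $\{0,\pm 1\}$ integrality of fundamental circuits from $B_0$ to an arbitrary basis $B'$ genuinely uses the hypothesis that all circuit lattices coincide. Without it one cannot conclude that $C(B',e)\in\ker(A)$, and the argument breaks down at the very start.
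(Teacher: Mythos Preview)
Your argument is correct. It is also a genuinely different and somewhat cleaner route than the paper's. The paper proceeds in two stages: first it argues (via a case analysis using the hypothesis and the inclusion $C(M)\subseteq C(M')$) that every support-minimal linear dependence among the columns of $A$ can be written with coefficients in $\{0,\pm1\}$; then it uses Cramer's rule together with connectivity of the basis-exchange graph to propagate the equality $\det=\pm1$ from the identity block $A_{B_0}=I_r$ to every maximal nonsingular submatrix. You bypass both stages by working with $M$-fundamental circuits of an arbitrary $M'$-basis $B'$: the hypothesis $\Lambda_{B'}=\Lambda_{B_0}$ places these $\{0,\pm1\}$-vectors directly into $\ker(A)$, uniqueness of the dependence on $B'\cup\{e\}$ identifies them with the columns of $(A_{B'})^{-1}A$, and restricting to the $B_0$-columns reads off $(A_{B'})^{-1}$ itself as an integer matrix. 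The trade-off is that the paper's approach isolates, as an intermediate statement, that every $M'$-circuit has $\pm1$ coefficients, which is of some independent interest; your approach gets unimodularity in one stroke without that detour.
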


\begin{proof}

First, we will prove that it suffices to show that every support-minimal linear dependence of columns of $A$ can be written with coefficients of absolute value 1.  If this is the case, then we can apply Cramer's rule to prove the unimodularity of $A$.  

Let $N$ be a square submatrix of $A$ which has rank $r$.  We will say that the columns of such an $N$ form a basis for $A$.  If in addition $\text{det}(N)=\pm 1$, we will say that $N$ is a unimodular basis for $A$.  
Apply Cramer's rule to a submatrix $N$ of $A$ whose columns form a basis for $A$ and a column $b$ of $A$ which is not in $N$.  Then there exists a unique linear dependence among the columns of $N$ and $b$, which is the fundamental circuit $C(N,b)$.  Assume for the moment that the coefficients of this dependence are in $\{1,0,-1\}$.  If $\det(N)=\pm 1$, then $\det(N_b^i)\in \{1,0,-1\}$ for each $i$, where $N_b^i$ denotes the matrix obtained from $N$ by replacing the $i^{th}$ column of $N$ with $b$.  This proves that any basis which can be obtained from a unimodular basis via a single basis exchange move is itself unimodular.  Any basis can be obtained from any other via a sequence of basis exchange moves, so it is sufficient that there exist some unimodular basis of $A$.  Since $A$ contains a full-rank identity matrix as a submatrix, this holds.  Therefore it remains to prove that every support-minimal linear dependence of columns of $A$ can be written with coefficients of absolute value 1.

Suppose there is some support-minimal linear dependence in $A$ which cannot be expressed in this way, so we have $a_1e_1+\dots+a_ke_k=0$ for the $a_i$ nonzero integers not all having the same absolute value and the $e_i$ columns of $A$.  (By abuse of notation, we let $e_1,\dots,e_k$ denote any set of $k$ distinct columns of $A$; these need not be the first $k$ columns of $A$.)  This linear dependence is in the kernel of $A$, so it can be written as some $\Z$-linear combination of the fundamental circuits associated to $B_0$, since these form a basis for $\text{ker}(A)\cap \Z^E$.  Therefore $a_1e_1+\dots+a_ke_k$ is a linear combination of circuits in $C(B_0)$, so the signed support of this linear combination forms an oriented vector (that is, a linear combination of circuits) of $M$, which we will denote $C_k$.  Therefore the support of $C_k$ is a dependent set in $M$, and we divide into two cases.  First, we assume that $C_k$ is actually a circuit of $M$, in which case the linear dependence in $M$ is actually support-minimal.

$C_k$ is a circuit of $M'$ by definition.  By hypothesis, all $\Lambda_i$ are equal, so $C_k$ is in the $\Z$-span of the fundamental circuits of $B_0$, that is, in $\Lambda_0$.  Then we have that $C_k-(a_1e_1+\dots+a_ke_k)$ is also in the $\Z$-span of elements of $C(B_0)$.  By choosing an appropriate integer multiple $t$ of $C_k$, at least one term $e_i$ in $tC_k-(a_1e_1+\dots+a_ke_k)$ can be made to cancel.  But this gives some new circuit of $M'$ whose support is properly contained in the support of $C_k$, contradicting the support-minimality of $C_k$.

Next, suppose $C_k$ is an oriented vector in $M$ but not a circuit.  Then there is some circuit of $M$ whose support is properly contained in that of $C_k$; call this circuit $C_j$.  Since $C(M)\subseteq C(M')$, we would have $C_j$ also a circuit of $M'$, contradicting the support-minimality of $C_k$.

\medskip

This gives that every support-minimal linear dependence of columns of $A$ can be written such that the coefficients in its support have absolute value 1.  Combined with Cramer's rule, this proves the unimodularity of $A$.  Since $A$ is unimodular, by Theorem \ref{unimodular} $M'$ must be regular.  

\end{proof}

It remains to prove the third step, that $M=M'$; equivalently, we can show that the circuits of $M$ are exactly the circuits of $M'$.  

\begin{claim}
$C(M) \subseteq C(M')$.
\end{claim}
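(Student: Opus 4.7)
The plan is to show that each circuit $C$ of $M$ is also a circuit of $M'$; since circuits of $M'$ are the signed supports of support-minimal elements of $\text{ker}(A)\cap\Z^E$, I need to verify (a) that $C$ lies in $\text{ker}(A)\cap\Z^E$, and (b) that $C$ is support-minimal there. The sign-pattern match then comes for free, since $C$ is already a $\{0,\pm 1\}$-vector.

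For (a), by construction each fundamental circuit in $C(B_0)$ is an integer dependence among the columns of $A$, so $C(B_0)\subseteq \text{ker}(A)\cap\Z^E$. The standing hypothesis $\Lambda_i=\Lambda_0$ for every basis $B_i$ says that $\Lambda_0$ is the full circuit lattice of $M$. Hence the circuit $C$, viewed as a $\{0,\pm 1\}$-vector, is a $\Z$-combination of members of $C(B_0)$, and therefore lies in $\text{ker}(A)\cap\Z^E$.

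For (b), I argue by contradiction. Suppose some $D\in\text{ker}(A)\cap\Z^E$ satisfies $\text{supp}(D)\subsetneq\text{supp}(C)$. Because $C(B_0)$ is a $\Q$-basis of $\text{ker}(A)$, I can write $D=q_1c_1+\cdots+q_\ell c_\ell$ with $q_i\in\Q$ and $c_i\in C(B_0)$. Clearing denominators yields a nonzero integer multiple $tD$ in the $\Z$-span of $C(B_0)$, which by hypothesis equals the circuit lattice of $M$. Thus $\text{supp}(tD)=\text{supp}(D)$ is a dependent set in $M$ properly contained in $\text{supp}(C)$, contradicting the circuit-minimality of $C$. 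Therefore $C$ is support-minimal in $\text{ker}(A)\cap\Z^E$, and the sign pattern matches automatically, witnessing $C\in C(M')$.

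I do not expect a serious obstacle: the reasoning is essentially a rerun of the first claim, now tidied by having $M'$ already known to be regular from the previous step. The only subtlety to guard against is identifying ``the signed support of $C$ as a circuit of $M'$'' with $C$ itself; this is legitimate precisely because $C$ has $\{0,\pm 1\}$-entries and is already the support-minimal integer kernel vector being invoked. Combined with Claim~2, this closes the loop: $C(M)\subseteq C(M')$ together with regularity of $M'$ is the input needed for the concluding identification $M=M'$ that finishes the proof of Theorem~\ref{circuitlattices}.
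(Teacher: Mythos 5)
You have proved the containment that is literally printed, but not the one this step of the paper actually needs. The paper's own outline of the argument says the third step is to show $C(M) \supseteq C(M')$ (the claim header in the source is a typo), because $C(M)\subseteq C(M')$ is exactly the content of the first Claim, already established before the unimodularity step. Your argument --- membership of each circuit of $M$ in $\ker(A)\cap\Z^E$ via the lattice hypothesis, then support-minimality by clearing denominators --- is a correct rerun of the paper's proof of that first Claim, as you yourself observe, so it adds nothing new at this point. The genuine gap is your closing assertion that $C(M)\subseteq C(M')$ together with the regularity of $M'$ is ``the input needed for the concluding identification $M=M'$.'' A one-sided containment of circuit sets does not determine a matroid: nothing you have shown rules out $M'$ possessing a circuit whose support is independent in $M$, and in general two distinct matroids on the same ground set can have nested circuit families. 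The reverse containment therefore requires its own argument, and supplying it is the entire point of this step.

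The paper's proof here is also genuinely different in method from yours: it does not argue circuit by circuit at all. It compares lattices, observing that all the $\Lambda_i$ coincide by the standing hypothesis on $M$, that all the $\Lambda_j'$ coincide because $M'$ has just been shown to be regular (so Proposition \ref{folklore} applies to it), and that $\Lambda_0=\Lambda_0'$ by the very construction of $A$; from the resulting equality of circuit lattices it concludes $C(M)=C(M')$, hence that $A$ is a totally unimodular representation of $M$. To complete your write-up you would need to prove the missing direction --- that every support-minimal element of $\ker(A)\cap\Z^E$ has the signed support of a circuit of $M$ --- either by reproducing this lattice comparison or by a direct minimality argument inside $\Lambda_0$; as it stands, the step that actually forces $M=M'$ is absent.
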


\begin{proof} All $\Lambda_i$ are equal by hypothesis, and all $\Lambda_j'$ are equal by the fact that $M'$ is regular.  (Here, we denote by $\Lambda_j'$ the lattice generated by the fundamental circuits of a basis $B_j'$ of $M'$.)  Moreover, $\Lambda_0=\Lambda_0'$ by construction of $A$.  Therefore, $M$ and $M'$ have the same circuit lattice and a common basis, so $C(M)=C(M')$.  Since $C(M')$ is the set of support-minimal elements of $\text{ker}(A) \cap \Z^E$, so is $C(M)$.  This implies that $A$ is a totally unimodular representation for $M$, so $M$ is regular. 

\end{proof}

This completes the proof of the equivalence of (1) and (2) in Theorem \ref{circuitlattices}.


Note that the dual statement for lattices generated by fundamental cocircuits of bases is also true.  That is, $M$ is regular if and only if all the $\Lambda_i^*$ are the same.

We now turn to proving the equivalence of (1) and (3) of Theorem \ref{circuitlattices}:  an orientable matroid $M$ is regular if and only if $\text{rank}(\Lambda)=\text{corank}(M)$.


Proposition \ref{folklore} gives that if $M$ is regular, then $\text{rank}(\Lambda)=\text{corank}(M)$.  
So, suppose $M$ is nonregular.  By (2) of Theorem \ref{circuitlattices}, there are two bases, call them $B_1$ and $B_2$, which do not generate the same circuit lattice.  Then there is an oriented fundamental circuit $c$ of $B_2$ which is not in the integral span of the circuits of $B_1$.  In order to produce an independent set of circuits of size $\text{corank}(M)+1$, we will show that there is no circuit in $c_1\in C(B_1)$ which is in the integral span of $(c\cup C(B_1))\setminus c_1$.  This will prove that the minimum size of a generating set for the lattice generated by $C(B_1)\cup c$ is at least $\text{corank}(M)+1$.  Suppose to the contrary that
\[
c_1=zc+z_2c_2+\dots+z_kc_k
\]
for $z,z_2,\dots,z_k\in \Z$, $c_1\in C(B_1)$, and $c_2,\dots,c_k\in C(B_1)\setminus \{c_1\}$.  Since by assumption $c$ is not in the $\Z$-span of $C(B_1)$, we must have $|z|>1$.  

Since $c_1$ is a fundamental circuit of $B_1$, there exists a unique $e_1\in E\setminus B_1$ which is in the support of $c_1$.  This $e_1$ is not in the support of any of $c_2,\dots,c_k$, since these are fundamental circuits of $B_1$ distinct from $c_1$.  Therefore, in order for the integral dependence to hold, $e_1$ must be in the support of $c$.  Since all nonzero entries in $c_1$ have absolute value 1, this implies that $|z|=1$, contradicting the initial assumption that $c$ is not in the $\Z$-span of $C(B_1)$.  Therefore, $C(B_1)\cup c$ forms an integrally independent set of circuits of size $\text{corank}(M)+1$, proving that $M$ is regular if and only if $\text{rk}(\Lambda)=\text{cork}(M)$.



\end{proof}

Note that the maximum size of a set of integrally independent circuits of $M$ is equal to the rank of the circuit lattice of $M$.  Since the set of fundamental circuits of a single basis is always linearly independent, it will always be the case that $\text{rank}(\Lambda)\ge \text{corank}(M)$, and (3) of Theorem \ref{circuitlattices} is equivalent to the statement that this inequality is actually an equality if and only if $M$ is regular.

As a corollary of Theorem \ref{circuitlattices}, we observe that the Jacobian of a matroid is well-defined if and only if the matroid is regular.  There are several equivalent definitions of the Jacobian: Biggs proves in \cite{biggs} that when $G=(V,E)$ is a graph, the following groups are all isomorphic and have order equal to the number of spanning trees of the graph:
$$\frac{\Z^E}{\Lambda\oplus \Lambda^*} \cong \frac{\Lambda^\#}{\Lambda} \cong \frac{\Lambda^{*\#}}{\Lambda^*}$$
where $\Lambda^\#$ denotes the dual lattice to $\Lambda$; that is, $\Lambda^\#=\{y\in \Q^E:\la x, y \ra \in \Z^E \text{ for all }x\in \Lambda\}$.  Each of these isomorphic groups is often referred to as the \textit{Jacobian} of the graph.

The proofs of these facts go through mutatis mutandis for regular matroids.  This means that the Jacobian of a regular matroid can be defined in the same way as for a graph, and its cardinality will be equal to the number of bases of the matroid.  For nonregular matroids, though, none of these groups will have cardinality equal to the number of bases.  The fact that these groups have the ``right'' cardinality in the regular case relies crucially on two facts: first, that the fundamental circuits of any basis are an integral basis for the circuit space of the matroid, and that each circuit is orthogonal to each cocircuit.  We have shown that these facts never hold in the nonregular case, which explains why the cardinality of these groups will never be ``right''; that is, the cardinality will not be equal to the number of bases.  This explains why none of the natural candidates for the Jacobian of a nonregular matroid have the properties we would desire from an analogy with the regular case.

\section{Distance from regularity}\label{distance}

The characterizations of regularity given in Section \ref{characterizations} are certainly not an exhaustive list; others can be found in \cite{chiho}, \cite{goldenratio}, \cite{tutte} and \cite{seymour}.  Most such characterizations come in two flavors.  The first is a condition that a matroid is regular if and only if it is representable over some finite set of fields.  For example, an orientable matroid is regular if and only if is is representable over $\F_2$ (see (4) of Theorem \ref{unimodular}).  
The other flavor is that of excluded minor characterizations.  In \cite{tutte}, Tutte proves that a matroid is regular if and only if it does not contain a minor isomorphic to either the Fano matroid or its dual.  It is also true that an orientable matroid is regular if and only if it contains no minor isomorphic to the uniform matroid $U_4^2$.  (This follows from the facts that an oriented matroid is regular if and only if it is binary, and a matroid is binary if and only if it contains no $U_4^2$ minor.)  Yuen uses this last characterization in \cite{chiho} to prove that an oriented matroid $M$ is regular if and only if the number of circuit-cocircuit equivalence classes of orientations is equal to the number of bases.  
The characterizations in Theorem \ref{circuitlattices} can be used to provide a measure of ``how badly" $M$ fails to be regular.  We use (3) of this theorem to provide an irregularity parameter for an oriented matroid, and prove that it is monotone under taking minors.

We define this irregularity parameter to be the quantity $\rho(M)=\text{rank}(\Lambda)- \text{corank}(M)$.  We have that $\rho(M)=0$ if and only if $M$ is regular, so the value $\rho(M)$ may be thought of as a measure of how far $M$ is from being regular.  

\begin{proposition}

$\rho(M)$ is additive on direct sums and monotone under taking minors.

\end{proposition}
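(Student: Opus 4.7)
The plan is to handle each assertion separately; for monotonicity it suffices to show $\rho(M \setminus e) \le \rho(M)$ and $\rho(M/e) \le \rho(M)$ for each $e \in E$, since every minor is obtained by iterating these two operations. Additivity under direct sums is nearly definitional: when $M = M_1 \oplus M_2$ with ground set $E_1 \sqcup E_2$, the oriented circuits of $M$ are exactly those of $M_1$ (supported in $E_1$) together with those of $M_2$ (supported in $E_2$). Embedding everything into $\Z^{E_1 \sqcup E_2} = \Z^{E_1} \oplus \Z^{E_2}$ gives $\Lambda(M) = \Lambda(M_1) \oplus \Lambda(M_2)$, so the ranks of the circuit lattices add; combining with the obvious additivity of corank across direct sums yields $\rho(M) = \rho(M_1) + \rho(M_2)$.

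For deletion $M \setminus e$, the argument splits on whether $e$ is a coloop. If $e$ is a coloop, no circuit of $M$ meets $e$, so $\Lambda(M \setminus e) = \Lambda(M)$ and $\text{corank}$ is unchanged, yielding $\rho(M \setminus e) = \rho(M)$. Otherwise $\text{corank}(M \setminus e) = \text{corank}(M) - 1$, and the circuits of $M \setminus e$ are precisely the circuits of $M$ with support avoiding $e$, so $\Lambda(M \setminus e)$ embeds as a sublattice of $\{v \in \Lambda(M) : v_e = 0\}$, the kernel of the $\Z$-linear coordinate evaluation $\Lambda(M) \to \Z$, $v \mapsto v_e$. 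Because some circuit of $M$ has $e$-coordinate $\pm 1$, this evaluation is surjective, so its kernel has rank exactly $\text{rank}(\Lambda(M)) - 1$; the two $-1$'s cancel to give $\rho(M \setminus e) \le \rho(M)$.

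For contraction $M/e$, if $e$ is a loop then $M/e = M \setminus e$ and we defer to the deletion case. Otherwise $\text{corank}(M/e) = \text{corank}(M)$, and we would appeal to the standard description of oriented contraction from \cite{redbook}: every signed circuit of $M/e$ arises as the restriction to $E \setminus e$ of some signed circuit of $M$. Writing $\pi \colon \Z^E \to \Z^{E \setminus e}$ for the coordinate projection, this says the generators of $\Lambda(M/e)$ all lie in $\pi(\Lambda(M))$, so $\Lambda(M/e) \subseteq \pi(\Lambda(M))$, and therefore $\text{rank}(\Lambda(M/e)) \le \text{rank}(\Lambda(M))$, which gives $\rho(M/e) \le \rho(M)$. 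The main subtlety of the whole argument is precisely this contraction step: the deletion bound drops out of definitions, whereas the contraction bound rests on the oriented-circuit-restriction description of $\CC(M/e)$ from oriented matroid theory, which one should either cite or re-derive from the signed circuit axioms.
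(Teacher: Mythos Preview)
Your proof is correct, and it takes a genuinely different route from the paper on the monotonicity half. The paper argues contraction first and then obtains deletion by duality: since $M\setminus e=(M^*/e)^*$, monotonicity under deletion is deduced from monotonicity under contraction applied to $M^*$. That step tacitly uses $\rho(M)=\rho(M^*)$, which the paper does not prove separately. You instead treat deletion head-on via the coordinate evaluation $\Lambda(M)\to\Z$, $v\mapsto v_e$, observing it is surjective whenever $e$ lies in some circuit, so the kernel (which contains $\Lambda(M\setminus e)$) has rank exactly $\text{rank}(\Lambda(M))-1$; this matches the drop in corank. For contraction you use that every signed circuit of $M/e$ is the projection of a signed circuit of $M$, giving $\Lambda(M/e)\subseteq\pi(\Lambda(M))$.

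What each approach buys: the paper's route is shorter in principle---once one knows $\rho$ is self-dual, only one of deletion or contraction needs a direct argument---but it leaves that self-duality as an unspoken lemma. Your approach is fully self-contained and does not need $\rho(M)=\rho(M^*)$ at all; the cost is two separate (but short) arguments. Your handling of the degenerate cases is also cleaner: reducing the loop case of contraction to deletion avoids the bookkeeping the paper does there.
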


\begin{proof}

Suppose $M\cong M_1\oplus M_2$.  Both the bases and the circuits of $M$ can be separated into the summands $M_1$ and $M_2$.  The maximum number of integrally independent circuits of $M$ is equal to $\text{rank}(\Lambda(M_1))+\text{rank}(\Lambda(M_2))$, and $\text{corank}(M)=\text{corank}(M_1)+\text{corank}(M_2)$.  This immediately implies the first statement.

\medskip

For the second statement, consider first the behavior of $\rho(M)$ on the contraction of some $e\in E$, denoted $M/e$.  Assume first that $e$ is not a loop.  Contracting $M$ decreases the rank of the matroid by 1 but leaves the corank unchanged; that is, $\text{rank}(M)=\text{rank}(M/e)+1$ and $\text{corank}(M)=\text{corank}(M/e)$.  It is obvious that the number of integrally independent circuits cannot increase under contraction, since any maximal independent set of circuits of $M$ will descend to a maximal independent set of circuits of $M/e$.  Therefore $\rho(M/e)\le \rho(M)$.  In the case that $e$ is a loop, $\text{rank}(M/e)=\text{rank}(M)$ and $\text{corank}(M/e)=\text{corank}(M)$, so $\rho(M/e)=\rho(M)$.  

\smallskip

Deleting an element $e\in M$ is equivalent to contracting the corresponding element of $M^*$.  Therefore, the monotonicity of $\rho(M)$ on deletion follows from the monotonicity of $\rho(M')$ on contraction.


\end{proof}

Since $\rho$ is monotone under taking minors, the class of matroids such that $\rho(M)\le n$ for any nonnegative integer $n$ is a minor-closed family.  Clearly the only forbidden minors for $\rho(M)=0$ are all orientations of $U_4^2$, since any oriented matroid with $\rho(M)>0$ is nonregular and therefore contains a $U_4^2$ minor.  An easy computation shows that $\rho(U_4^2)=2$, since the four circuits of $U_4^2$ form an integrally independent set and corank$(U_4^2)=2$.  Therefore every nonregular matroid $M$ has $\rho(M)\ge 2$.  It could be interesting to investigate whether there is a finite list of forbidden minors characterizing the property $\rho \le n$ for an integer $n$.  

\section{Acknowledgements}

The author would like to thank Matt Baker for many helpful discussions and for helpful feedback on an earlier version of this paper.


\begin{thebibliography}{99}
    \bibitem{oxley}
    {Oxley, Matroid Theory, volume 3., Oxford University Press,     USA, 2006}
    \bibitem{nickel}
    {R. Nickel.  Flows and Colorings in Oriented Matroids, 2012 (Ph.D. thesis).}
    \bibitem{redbook}
    {Anders Bj\"orner, Michael Las Vergnas, Bernd Sturmfels, Neil White, and G\"unter Ziegler, Oriented Matroids, volume 46 of Encyclopedia of Mathematics and its Applications, Cambridge University Press, Cambridge, 1999}
    \bibitem{chiho}
    {C.H. Yuen, On the number of circuit-cocircuit reversal classes of an oriented matroid, Preprint.  Available at https://arxiv.org/pdf/1707.00342.pdf}
    \bibitem{goldenratio}
    {R.A. Pendavingh and S.M.H. van Zwam, Lifts of matroids over partial fields.  J. Comb. Theory Series B, vol. 100, issue 1, pp. 26-67.  2010.}
    \bibitem{tutte}
    {W.T. Tutte, Lectures on Matroids.  J. Res. Nat. Bur. Standards, Sect. B, vol. 69B, pages 1-47.  1965.}
    \bibitem{white}
    {N. White.  Combinatorial geometries, volume 29 of of Encyclopedia of Mathematics and its Applications.  Cambridge University Press, 1987.}
    \bibitem{tumatrix}
    {J. Bader, R. Hildebrand, R. Weismantel, R. Zenklusen, Mixed integer reformulations of integer programs the the affine TU-dimension of a matrix.  Mathematical Programming, vol. 169, issue 2, pp 565-584.  2017.}
    \bibitem{seymour}
    {P.D. Seymour, Decomposition of regular matroids.  J. Comb. Theory Series B, vol. 28, issue 3, pp. 305-359.  1980.}
    \bibitem{biggs}
    {N. Biggs, Algebraic potential theory on graphs.  Bulletin of the London Mathematical Society, volume 29 issue 6, 1997.}
\end{thebibliography}
\end{document}